\documentclass[12pt]{amsart}
\usepackage[applemac]{inputenc}

\usepackage{fullpage}
\usepackage{
 ulem, 
 amsfonts}
\usepackage{amsmath,esint, amssymb, amsthm}

\usepackage[colorlinks=true, pdfstartview=FitV, linkcolor=blue, citecolor=blue,
 urlcolor=blue]{hyperref}
\usepackage{color}
\newtheorem{Theo}{Theorem}
 \newtheorem{lem}{Lemma}

 \newtheorem{prop}{Proposition}
 
 \newtheorem{defi}{ Definition}
 \newtheorem*{gracies}{Acknowledgements}
  \newtheorem{remark}{ Remark}
  
\newcommand{\CC}{\mathbb{C}}  
 
\newcommand{\RR}{\mathbb{R}}

\newcommand{\R}{\mathbb{R}}

\author[T. Hmidi]{Taoufik Hmidi}
\address{IRMAR, Universit\'e de Rennes 1\\ Campus de
Beaulieu\\ 35~042 Rennes cedex\\ France}
\email{thmidi@univ-rennes1.fr}

\title[]{On the trivial solutions for the rotating patch model}

\begin{document}
\begin{abstract}
In this paper we study the clockwise simply connected rotating patches for Euler equations. By using the moving plane method we prove that Rankine vortices  are the only solutions to this problem in the class of {\it slightly} convex domains.  We discuss in the second part of the paper the case where the angular velocity $\Omega=\frac12$ and we show without any geometric condition that the set of the V-states  is trivial and reduced to the Rankine vortices.
\end{abstract}
\maketitle

\section{Introduction}

We shall study in this paper some aspects of the vortex motion for the two-dimensional incompressible  Euler system which can be written with the vorticity-velocity formulation in the form, \begin{equation}\label{vorticity}
\begin{cases}
\partial_{t}\omega+v\cdot\nabla  \omega=0, \quad x\in \R^2, \; t > 0,\\
v= \nabla^{\bot}\triangle^{-1}\omega, &
 \\ \omega(0,x) =\omega_{0}(x). &
\end{cases} 
\end{equation}
Here $\nabla^\perp=(-\partial_2,\partial_1),$ $v=(v_1,v_2)$ is the velocity field and the $\omega$ its vorticity given by the
scalar
$\omega=
\partial_1 v_2 -\partial_2 v_1. $ The classical theory dealing with the local$\backslash$global well-posedness of smooth solutions is well developed and we refer for instance to \cite{BM,Ch}.

According to Yudovich result \cite{Y1}  the vorticity equation has a unique
global solution in the weak sense provided the initial vorticity
$\omega_0$ belongs to $L^1 \cap L^\infty$. This result allows to deal rigorously with the so-called vortex patches which are initial  vortices uniformly distributed in a confined region $D$, that is,   $\omega_0=\chi_{D}$  the characteristic function of $D.$ Since the  vorticity is transported along
trajectories, we conclude that the vorticity preserves the vortex patch structure for any positive time. This means that for any $t\geq0$,     $\omega(t)=\chi_{D_t}$, with  $D_t=\psi(t,D)$ is the image
of $D$ by the flow $\psi$ which satisfies the ordinary differential equation
\begin{equation}\label{flux}
\partial_t\psi(t,x)= v(t,\psi(t,x)), \quad \psi(0,x)=x.
\end{equation}
The dynamics of the boundary of $D_t$ is in general  complex and very difficult to follow. By using the contour dynamics method we may parametrize the boundary by a function $\gamma_t:\mathbb{T}\to \partial D_t$ satisfying   a nonlinear and non local equation of the following type
$$
\partial_t\gamma_t=-\frac{1}{2\pi}\int_{\partial D_t}\log|\gamma_t-\xi|d\xi.
$$
There are  few examples known in the literature  with explicit   dynamics. The first one is Rankine vortex where $D$ is a disc, in this case the particle trajectories are
circles centered at the origin, and  therefore $D_t =D, \;\forall\, t\geq0.$ The second example   is a remarkable one and  discovered by Kirchhoff \cite{Kirc} is the ellipses. In this case the domain $D_t$ does not change its shape and  undergoes a perpetual rotation around its barycenter  with uniform angular velocity  $\Omega$ 
related to the semi-axes $a$ and $b$ 
through the formula $\Omega = ab/(a+b)^2.$ See, for instance,
\cite[p. 304]{BM}.

It seems that the ellipses are till now the only explicit example with such properties but wether or not  other non trivial implicit  rotating patches  exist has been discussed in the last few decades from numerical and theoretical point of view. To be more precise about these structures, we say that $\omega_0=\chi_D$ is a V-state or a rotating patch if there exists a real \mbox{number $\Omega$} called the angular velocity such that the  support of the vorticity $\omega(t)=\chi_{D_t}$ is described by
$$
D_t={R}_{x_0,\Omega t} D,\quad \forall t\geq 0,
$$ 
with ${R}_{x_0,\Omega t}$ being the planar rotation with center $x_0$ and angle $\Omega t.$ Deem and Zabusky \cite{DZ} were the first to reveal numerically  the existence of 
simply connected V-states with the {$m-$fold} symmetry for the integer $m=3,4,5.$ Recall that  a domain $D$ is said to be  $m$-fold symmetric if { it is invariant by the dihedral group $D_m$ which is the symmetry group  of a regular polygon of $m$ sides}.  A few years later, Burbea \cite{B} gave an analytic proof by using the 
bifurcation theory showing the existence of a countable family of V-states with the $m$-fold symmetry for any $m\geq2$. They can be identified to one-dimensional branches bifurcating from the Rankine vortex at the  simple ``eigenvalues" $\big\{\Omega=\frac{m-1}{2m}, m\geq2\big\}.$ See also \cite{HMV}, where the
$C^\infty$ boundary regularity of the bifurcated $V-$states close to the
disc was proven. It seems that close to the disc, the bifurcating branches rotate with  bounded angular velocities, $\Omega\in ]0,\frac12[$. It is important to know wether all the V-states possess an angular velocity in this strip. From the Implicit Function Theorem we know that close to the disc there are no non trivial V-states associated to  $\Omega\notin [0,\frac12]$. 

 In this paper we give a partial answer to this problem. We shall first show that there is no clockwise rotating patches, that is, $\Omega\leq 0,$ but with some geometric constraints. When $\Omega=0$, this corresponds to stationary patches and we know from a recent result of Fraenkel \cite{Fran} in gravitational theory that the discs are the only stationary patches. He used the techniques of moving plane method which can be adapted to our framework   only when   $\Omega\le0.$ More precisely, we obtain the following result.
\begin{Theo}\label{thm1}
Let $D$ be a $C^1$ bounded  simply connected  domain convex or more generally being in the class  $\Sigma_{\arccos\frac{1}{\sqrt{5}}}$ introduced in the Definition $\ref{def2}$. Assume that $\chi_D$ is a  V-state satisfying the equation \eqref{vorticity} with the angular velocity $\Omega<0$. Then necessarily $D$ is a disc.
\end{Theo}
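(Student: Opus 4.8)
The plan is to adapt the moving plane method of Fraenkel to the rotating patch equation with $\Omega<0$. First I would write down the stationary equation satisfied by a V-state. If $\chi_D$ rotates with angular velocity $\Omega$, then in the co-rotating frame the boundary $\partial D$ is a level set of the modified stream function
\[
\Psi(x)=\frac{1}{2\pi}\int_D\log|x-y|\,dy-\frac{\Omega}{2}|x|^2+c,
\]
and the V-state condition becomes the requirement that $\Psi$ is constant on $\partial D$, together with a sign condition on the normal derivative coming from the fact that the patch is genuinely transported. The key structural point is that $-\Delta\Psi=-\chi_D+2\Omega$ wait — more precisely $\Delta\Psi=\chi_D-2\Omega$, so when $\Omega<0$ the quadratic term adds a subharmonic contribution with the favorable sign: outside $D$ one has $\Delta\Psi=-2\Omega>0$, and this is exactly what makes the comparison arguments of the moving plane method go through, whereas for $\Omega>0$ it fails.

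Next I would set up the reflection. For a direction $e$ and a hyperplane $T_\lambda=\{x\cdot e=\lambda\}$, denote by $x^\lambda$ the reflection of $x$ across $T_\lambda$, and compare $\Psi(x)$ with $\Psi(x^\lambda)$ on the ``cap'' $\Sigma_\lambda=D\cap\{x\cdot e>\lambda\}$. The function $w_\lambda(x)=\Psi(x)-\Psi(x^\lambda)$ should satisfy a differential inequality of the form $\Delta w_\lambda \le (\text{bounded})\,w_\lambda$ or rather an integral inequality, because of the nonlocal term; here one uses that $\log|x-y|-\log|x^\lambda-y|$ has a definite sign when both $x$ and $y$ lie on the appropriate side of $T_\lambda$, plus the convexity/geometric hypothesis $D\in\Sigma_{\arccos(1/\sqrt5)}$ to control the part of $D$ that is \emph{not} reflected into $D$. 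One starts the plane at infinity where the cap is empty, moves it inward, and shows by a maximum-principle/Hopf-lemma argument combined with the sign of $-2\Omega$ that $w_\lambda\ge 0$ persists until the plane reaches a critical position; the usual dichotomy (either the cap degenerates or $\partial D$ becomes orthogonal to $T_\lambda$) then forces symmetry of $D$ about $T_\lambda$. Running this in every direction $e$ yields that $D$ is symmetric about every line through its barycenter, and one concludes that $D$ is a disc.

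The main obstacle, and the place where the geometric hypothesis enters, is the \emph{nonlocality} of the operator $x\mapsto\int_D\log|x-y|\,dy$: unlike in the classical Gidas--Ni--Nirenberg or Serrin setting, the reflected cap $\Sigma_\lambda^\lambda$ need not be contained in $D$, so the kernel comparison $\log|x-y|\ge\log|x^\lambda-y|$ does not hold for all relevant $y$. Controlling the error coming from $D\setminus(D\cup\Sigma_\lambda^\lambda)$ — showing it is dominated by the good term produced by $-2\Omega|x|^2/2$ and by the part of the kernel difference with the right sign — is precisely what requires $D$ to be convex, or more generally to satisfy the cone condition defining $\Sigma_{\arccos(1/\sqrt5)}$; the constant $\arccos(1/\sqrt5)$ presumably comes from optimizing this domination. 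A secondary technical point is the limited regularity ($D$ only $C^1$), which means the Hopf lemma and the boundary-point analysis must be done with care, likely via the explicit form of $\Psi$ near $\partial D$ rather than abstract elliptic estimates. I expect that once the sign of the kernel error is under control, the rest is a fairly standard, if delicate, implementation of the moving plane scheme.
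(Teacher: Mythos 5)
Your plan correctly identifies the two ingredients (the sign of $\Omega$ and a moving-plane scheme), but it misses the one idea that makes the paper's proof close, and it misplaces where the geometric hypothesis is used. The paper does \emph{not} run the moving plane on the domain $D$ and then fight the error coming from the part of $D$ that is not reflected into itself. It first proves (Lemma~\ref{lem1}) the level-set identity $D=\{\varphi>0\}$, $\partial D=\{\varphi=0\}$, $\RR^2\setminus\overline D\subset\{\varphi<0\}$ for $\varphi=\mu+\frac12\Omega|x|^2-\psi$, so that $\chi_D=H(\varphi)$ and the V-state condition becomes the closed integral equation $\varphi=\mu+\frac12\Omega|x|^2-\frac{1}{2\pi}\log|\cdot|*H(\varphi)$. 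The class $\Sigma_{\arccos(1/\sqrt5)}$ enters \emph{only} there: the inclusion $D\subset\{\varphi>0\}$ is the easy maximum principle, but $\varphi<0$ outside $\overline D$ cannot come from a minimum principle ($\varphi$ is merely superharmonic there and tends to $-\infty$), and the paper obtains it by showing $t\mapsto\varphi\big(x_0+t\vec{\nu}(x_0)\big)$ is strictly decreasing along every outward normal ray; the reflection condition $D_{x_0}^-\subset D$ and the cone condition produce the factor $1-5\big[\frac{x_0-y}{|x_0-y|}\cdot\vec{\nu}(x_0)\big]^2\ge0$, which is where $\arccos\frac{1}{\sqrt5}$ actually comes from --- not from a kernel domination inside the moving-plane step, as you conjecture.

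Once $\chi_D=H(\varphi)$ is in hand, the moving-plane comparison is clean and needs \emph{no} geometry at all: under the inductive hypothesis $\varphi(y)\ge\varphi(y_\lambda)$ on $H_\lambda$, the monotonicity of $H$ gives $H(\varphi(y))-H(\varphi(y_\lambda))\ge0$ for \emph{every} $y\in H_\lambda$, which combined with $\log\big(|x-y_\lambda|/|x-y|\big)\ge0$ and the strictly positive term $-2\lambda\Omega(\lambda-x_1)$ yields $\varphi_\lambda>0$ outright. Your proposed alternative --- dominating the error from the part of $D$ whose reflection leaves $D$ by the good term generated by $\Omega<0$ --- is exactly the step you leave unproved, and it faces a real obstruction: near the critical position the good term $-2\lambda\Omega(\lambda-x_1)$ degenerates linearly as $x_1\to\lambda$ (and vanishes as $\lambda\to0$, which is where the plane must be pushed to get symmetry about the barycenter), so there is no uniform margin with which to absorb an error term of unknown sign. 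The reformulation via $H(\varphi)$ is precisely what turns that error into a term of definite sign. As written, your proposal therefore has a genuine gap at its central step, and the geometric hypothesis is doing a different job than the one you assign to it.
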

The proof uses the moving plane method  in the spirit of the papers \cite{Fran, wolfgang}. We start first with reformulating the equation in an integral form by using the strong maximum principle. This can be done by noticing that  the stream function $\psi$ associated to the vorticity $\chi_D$  is a V-state if and only if  the following  equation holds true
\begin{equation}\label{Eqa1}
\varphi\triangleq\mu+\frac12\Omega |x|^2-\psi(x)=0,\quad\forall\,  x\in \partial D, 
\end{equation}
with $\mu$ a constant.
From the maximum principle applied to $\varphi$ in the domain $D$ one deduces that $D\subset\{\varphi >0\}.$ To get the equality between the latter sets which is essential for our approach we need to prove that $\varphi(x)<0$ for any $x\notin \overline{D}$. It is not clear how to get this result without any geometric constraint on the domain because the  function $\varphi$ is  only superharmonic in the complement of $D$ and  going to $-\infty$ at infinity and  therefore the associated  minimum principle is not conclusive. Thus we should put some geometric constraints on the domain in order to get the desired result. Notice that this kind of problem does not appear when $\Omega=0$ because  the function $\varphi$ is harmonic and the maximum principle can be applied.  Now once the domain is fully described by $\varphi$ the V-state equation can be reduced to a nonlinear integral equation on $\varphi$, 
\begin{equation*}
\varphi(x)=\mu+\frac12\Omega|x|^2-\frac{1}{2\pi}\int_{\RR^2}\log|x-y|\, H(\varphi(y))\,dy,\quad \forall x\in \RR^2.
\end{equation*}
with $H=\chi_{[0,+\infty[}$ being the Heaviside function. The second step consists in applying the techniques of the moving plane method in order to show that any solution $\varphi$ is radial and strictly decreasing.  Therefore the conclusion follows from the fact that  the boundary $\partial D$ is a level set of $\varphi$ and it must be a circle.

When $\Omega\geq0$, there is a competition between the quadratic potential describing the rotation effect and the gravitational potential. This competition is fruitful  in the strip  $\Omega\in ]0,\frac12[$ and  leads to non trivial examples as we have previously quoted concerning the existence of the $m$-folds.  The approach of moving plane method fails for $\Omega\geq\frac12$ because we are still in the region where there is an active  competition between the two potentials and we  lose the monotonicity of the nonlinear functional. Our second purpose in this paper is to establish    a global result for  the end-point  $\Omega=\frac12$ which is  very special. We shall prove a similar result to Theorem \ref{thm1} by using different techniques from  complex analysis.   Our result reads as follows.
\begin{Theo}\label{thm2}
Let $D$ be a $C^1$ bounded  simply connected  domain. Assume that $\chi_D$ is a  V-state satisfying the equation \eqref{vorticity} with the angular velocity $\Omega=\frac12$. Then necessarily $D$ is a disc.
\end{Theo}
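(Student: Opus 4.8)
The plan is to recast the V‑state condition \eqref{Eqa1} as an equation for a single function holomorphic in $D$ and to exploit the algebraic cancellation that occurs precisely at $\Omega=\frac12$. Identify $\R^2$ with $\C$, write $z$ for the complex variable, and recall that the stream function is the logarithmic potential $\psi(z)=\frac1{2\pi}\int_D\log|z-w|\,dA(w)$, so that $\partial_z\psi(z)=\frac1{4\pi}\int_D\frac{dA(w)}{z-w}$. Since $\chi_D$ is a V‑state the real function $\varphi=\mu+\frac12\Omega|z|^2-\psi$ vanishes on $\partial D$ by \eqref{Eqa1}; because $\varphi$ is real and $\partial D$ is a $C^1$ Jordan curve, ``$\varphi\equiv\text{const on }\partial D$'' is equivalent to $\Rea\bigl(\partial_z\varphi\cdot z'\bigr)=0$ along $\partial D$. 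Now $\partial_z\varphi(z)=\frac{\Omega}{2}\bar z-\frac1{4\pi}\int_D\frac{dA(w)}{z-w}$, and at $\Omega=\frac12$ this equals $-\frac14\,g(z)$ with
$$g(z):=\frac1\pi\int_D\frac{dA(w)}{z-w}-\bar z.$$
Using $\partial_{\bar z}\frac1{z-w}=\pi\delta_w$ one gets $\partial_{\bar z}g=\chi_D-1=0$ in $D$, so $g$ is holomorphic in $D$; and since $D$ is $C^1$ the Cauchy transform is continuous up to $\partial D$, hence $g\in C(\overline D)$. The V‑state equation collapses to $\Rea\bigl(g(z)\,z'\bigr)=0$ on $\partial D$, i.e.\ $\Rea(g\,dz)=0$ along $\partial D$.

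Next I would show $g\equiv0$. Let $G$ be a holomorphic primitive of $g$ on the simply connected domain $D$; it is continuous on $\overline D$. Along $\partial D$ one has $d(\Rea G)=\Rea(g\,dz)=0$, so $\Rea G$ is constant on $\partial D$. Being harmonic in $D$ and continuous on $\overline D$, the maximum principle forces $\Rea G$ to be constant in $D$, hence $G$, and with it $g=G'$, vanishes identically in $D$. Therefore the Cauchy transform of $\chi_D$ satisfies $\frac1\pi\int_D\frac{dA(w)}{z-w}=\bar z$ for every $z\in D$.

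Then I pass from this identity to the conclusion that $D$ is a disc. Put $F(z)=\frac1\pi\int_D\frac{dA(w)}{z-w}$. Then $F$ is continuous on $\C$, holomorphic on $\C\setminus\overline D$ with $F(z)=O(1/z)$ at infinity, and $F=\bar z$ on $\overline D$ by the previous step. Consider $\Psi(z)=zF(z)$ on $\widehat\C\setminus\overline D$: it is holomorphic, $\Psi(\infty)=|D|/\pi$, and on $\partial D$ it is real since $\Psi=z\bar z=|z|^2$. Applying the maximum principle to the bounded harmonic function $\Ima\Psi$ on $\widehat\C\setminus\overline D$, which vanishes on the boundary, gives $\Ima\Psi\equiv0$; a real‑valued holomorphic function is constant, so $\Psi\equiv|D|/\pi$ and $F(z)=|D|/(\pi z)$ for $z\notin\overline D$. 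Letting $z$ tend to $\partial D$ and using $F=\bar z$ there yields $|z|^2=|D|/\pi$ on all of $\partial D$; hence $\partial D$ is contained in the circle of radius $\sqrt{|D|/\pi}$ about the origin, and since $D$ is a bounded domain it must coincide with that disc.

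The decisive point is the first step: at the endpoint $\Omega=\frac12$ the quadratic term $\frac{\Omega}{2}\bar z$ cancels exactly the non‑holomorphic part of $\partial_z\psi$, turning the V‑state equation into $\Rea(g\,dz)=0$ with $g$ holomorphic — which is what makes a purely complex‑analytic treatment possible in place of the moving plane method. Once this is seen, the only delicate ingredient is the $C^1$ regularity, used to push the maximum principle up to $\partial D$; the potential‑theoretic identification of the exterior Cauchy transform is then elementary.
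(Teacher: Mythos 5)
Your proof is correct and follows essentially the same route as the paper: your step showing $g\equiv 0$ via the primitive $G$ (with $\Rea G$ harmonic in $D$ and constant on $\partial D$) is just a repackaging of the paper's observation that $\varphi$ itself is harmonic in $D$ and vanishes on $\partial D$, hence $\varphi\equiv 0$ and $4\partial_z\psi=\bar z$ in $D$; and your exterior argument with $\Psi(z)=zF(z)$ is identical to the paper's use of $G(z)=z\,\mathcal{C}(\chi_D)(z)$. The only point worth tightening is the justification that the continuous extension of $G$ to $\overline D$ satisfies $d(\Rea G)=\Rea(g\,dz)$ along the $C^1$ boundary, which you can avoid entirely by working with $\varphi$ directly.
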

According to  the definition  \eqref{Eqa1} we can see that the value $\Omega=\frac12$ is very special because in this case $\varphi$ is harmonic inside the domain $D$ and vanishes on its boundary. This leads to an explicit formula for the Newtonian potential in $D$ which is quadratic 
and  turns out to be that of a circle. This is a kind of inverse problems was solved in a more general framework   in \cite{HMV}. The proof in our particular situation is more easier and for the convenience of the reader we shall give a complete proof of this fact. 
\begin{remark}
In Theorem $\ref{thm2}$ we do not assume any geometric property for the V-states and  therefore it is interesting to see whether we can get  rid of the geometric constraints in Theorem $\ref{thm1}.$ In the last section we shall deal with the case $\Omega=\frac12$ and give a proof for Theorem \ref{thm2}.
\end{remark}
\begin{remark}
In our results we require the boundary to be at least of class $C^1$. This appears when we move from the equation of the vorticity to the stream function formulation. The regularity could be relaxed to rectifiable boundaries.
\end{remark}
\begin{remark}
Wether  there are   V-states rotating faster than $\frac12$ remains open.
\end{remark}
The paper is organized as follows. In Section $2$ we shall discuss the mathematical model of the V-states. Section $3$ is devoted to the clockwise rotating patches and where we prove Theorem \ref{thm1}.

\section{Model}
Recall that a rotating patch, called  also  V-state, for Euler equations written in the \mbox{form \eqref{vorticity}} is a solution of the type $\omega(t)=\chi_{D_t}$ where the domain $D_t$ rotates uniformly with an angle velocity $\Omega$ around its barycenter  assumed to be zero, that is,
$$
D_t=R_{0,\Omega t}D,
$$
with $R_{0,\Omega t}$ being the rotation of center $0$ and angle $\Omega t.$
According to \cite{B,HMV} when the boundary of the patch is smooth this holds true if and only if 
\begin{equation}\label{master0}
\big( v(x)-\Omega x^\perp\big)\cdot \vec{n}(x)=0,\quad\forall x\in \partial D,
\end{equation}
with $\vec{n}(x)$ be the  outward-pointing normal unit vector to the boundary and   $v$ be the induced velocity by the patch $\chi_D$ which can be recovered from Biot-Savart law in its complex form as follows,
$$
v(z)=\frac{i}{2\pi}\int_{D}\frac{1}{\overline{z}-\overline{y}}dA(y), \quad z\in  \CC.
$$
 Integrating the equation \eqref{master0} yields
\begin{equation}\label{master}
\psi(z)-\frac12\Omega|z|^2=Cte\triangleq\mu,\quad\forall z\in \partial D,
\end{equation}
with $\psi$ the stream function associated to the patch $\chi_{D}$ and defined by
$$
\psi(z)=\frac{1}{2\pi}\int_{D}\log|z-y|\, dA(y),\quad z\in \CC.
$$
Recall the identity
\begin{eqnarray*}
\partial_z\psi(z)&=&\frac12 i\overline{v(z)}\\
&=&\frac{1}{4\pi}\int_{D}\frac{1}{{z}-{y}}dA(y).
\end{eqnarray*}
From the  Cauchy-Pompeiu formula one can write
   \begin{equation*}
 \frac{1}{2\pi i}\int_{\Gamma}\frac{\overline{z}-\overline{\xi}}{\xi
 -z}d\xi=\frac{1}{\pi}\int_{D}\frac{1}{{z}-{y}}dA(y), \quad z \in \CC.
    \end{equation*}
    Consequently
     \begin{equation}\label{GC1}
4 \partial_z\psi(z)=\frac{1}{2\pi i}\int_{\Gamma}\frac{\overline{z}-\overline{\xi}}{\xi
 -z}d\xi
, \quad \forall  z \in \CC.
    \end{equation}
    Thus  the equation \ref{master0} can be written in the following   complex form
 $$
 {\it{Re}}\Big\{\Big(2\Omega\overline{z}+\frac{1}{2i\pi}\int_{\partial D}\frac{\overline{\xi}-\overline{z}}{\xi-z}d\xi\Big)\vec{\tau}(z)\Big\}=0,\quad\forall\, z\in \partial D,
 $$
with $\vec{\tau}(z)$ being a tangent unit  vector to the boundary at the point $z$.  
 \section{Clockwise rotating patches}
Now we shall prove the main result stated in Theorem \ref{thm1}.  The case $\Omega=0$ was done in \cite{Fran} and therefore we shall focus only on  $\Omega<0.$ It  will be done in several steps in the spirit of the paper \cite{wolfgang}. Firstly, we shall transform the V-states equation into an integral one. To do so we need some strong geometric conditions on the domains. Secondly, we use the moving plane method to prove that all the solutions of the integral equation are radial symmetric and strictly decreasing. From this result we can deduce easily Theorem \ref{thm1}. 

\subsection{Integral equation}
Our first  goal is to transform the V-states equation into an integral form. This will be done under some geometric constraints on the domains of the  V-states. We shall for this purpose need some definitions.
\begin{defi}\label{defaz1}
Let $D$ be a $C^1$ simply connected domain. Let $x_0\in \partial D$, $\alpha\in [0,\frac\pi2]$ and  define the sector
$$
\hbox{Sec}_{x_0,\alpha}=\Big\{x\in\RR^2\backslash\{x_0\};\, \cos\alpha\leq \frac{x-x_0}{\|x-x_0\|}\cdot \vec{\nu}(x_0) \Big\}
$$
with $\vec{\nu}(x_0)$ the outward-pointing normal unit vector to the boundary at $x_0.$
We also define the  sets
$$
D_{x_0}^{+}=\Big\{x\in D;\,(x-x_0)\cdot \vec{\nu}(x_0)\geq0\Big\} 
$$
and 
$$
D_{x_0}^{-}=\Big\{y=x-2\big[(x-x_0)\cdot \vec{\nu}(x_0)\big] \vec{\nu}(x_0);\, x\in D_{x_0}^+\Big\}. 
$$

\end{defi}
Note that the set $D_{x_0}^+$ is the part of $D$ located above  the tangent line to the boundary at the point $x_0$ (the orientation is with respect to $\vec{\nu}$). As to  the set $D_{x_0}^{-}$, it  is the reflection of the set $D_{x_0}^{+}$ with respect to this tangent.
\begin{defi}\label{def2}
Let $D$ be a $C^1$ simply connected bounded domain  with zero as barycenter.
 We say that $D$  belongs to the class $\Sigma_\alpha$ with $\alpha\in [0,\frac\pi2]$ if
\begin{enumerate}
\item For each $x_0\in \partial D$, we have $x_0\cdot \vec{\nu}(x_0)\geq0$.
\item For each $x_0\in \partial D$, we have $\hbox{Sec}_{x_0,\alpha}\cap  D=\varnothing$. 
\item For each  $x_0\in \partial D$, the subset $D_{x_0}^{-}$ introduced in the Definition $\ref{defaz1}$ is contained in the domain $D.$ 

\end{enumerate}

\end{defi}
We shall make some comments.
\begin{remark}
\begin{enumerate}
\item  The first assumption in the previous definition means that the barycenter is always below any tangent line to the boundary.
\item If a domain belongs to the class $\Sigma_\alpha$ then it  belongs to the class $\Sigma_{\alpha^\prime}$ for any $\alpha^\prime\in[0,\alpha].$
\item Any convex domain belongs to $\Sigma_{\frac\pi2}$.
\item  Roughly speaking, when $\alpha$ is close to $\frac\pi2$ a domain in $\Sigma_\alpha$ is  "slightly convex".
\end{enumerate}

\end{remark}
Now we shall write down an integral equation for $\varphi.$
\begin{prop}\label{prop04}
Let  $D$ be  a $C^1$ simply connected domain belonging to  the class $\Sigma_{\arccos\frac{1}{\sqrt{5}}}$ introduced in the Definition $\ref{def2}$. Assume that $\chi_D$ is a V-state rotating with a negative angular velocity $\Omega<0$. Let  $\psi$ be  the stream function of $\chi_D$, then there exists a constant $\mu$  previously defined in \eqref{master} such that
\begin{equation}\label{master1}
\varphi(x)=\mu+\frac12\Omega|x|^2-\frac{1}{2\pi}\int_{\RR^2}\log|x-y|\, H(\varphi(y))\,dy,\quad \forall x\in \RR^2.
\end{equation}
with $H=\chi_{[0,+\infty[}$ being  the Heaviside function and 
$$
\varphi(x)\triangleq\mu+\frac12\Omega|x|^2-\psi(x).
$$
\end{prop}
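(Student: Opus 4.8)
The key point is to upgrade the inclusion $D\subset\{\varphi>0\}$, which follows from the maximum principle, to the equality $\{\varphi>0\}=D$ (equivalently $\varphi<0$ on $\Rn\setminus\overline D$). Once this is established, observe that $H(\varphi)=\chi_{\{\varphi>0\}}+\chi_{\partial D}=\chi_D$ almost everywhere (the boundary has measure zero since $D$ is $C^1$), so that $\frac{1}{2\pi}\int_{\RR^2}\log|x-y|\,H(\varphi(y))\,dy=\psi(x)$ and \eqref{master1} is just the definition of $\varphi$ rearranged. So the whole content of the proposition is the sign of $\varphi$ outside $\overline D$, and this is precisely where the geometric class $\Sigma_\alpha$ enters.

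The plan is to run the moving plane / reflection argument locally at each boundary point $x_0\in\partial D$. Fix $x_0$ and work in coordinates in which $\vec\nu(x_0)$ points in a fixed direction; let $L$ be the tangent line at $x_0$ and let $\sigma$ denote reflection across $L$. For $x$ in the open half-space \emph{above} $L$ but outside $\overline D$, I want to compare $\varphi(x)$ with $\varphi(\sigma x)$. Write the Newtonian part as an integral of $\log|x-y|$ against $\chi_D$. Splitting $D$ into $D_{x_0}^+$ and $D\setminus D_{x_0}^+$, and using the reflection-invariance of $D_{x_0}^-\subset D$ (condition (3)) together with the elementary fact that $\log|x-y|>\log|x-\sigma y|$ when $x$ and $y$ lie strictly on the same side of $L$, one gets that $x\mapsto \varphi(x)-\varphi(\sigma x)$ has a sign on the relevant region; combined with the quadratic term $\tfrac12\Omega|x|^2$, whose behaviour under $\sigma$ is controlled by condition (1) ($x_0\cdot\vec\nu(x_0)\ge 0$ forces $|x|\le|\sigma x|$ for $x$ above $L$, and $\Omega<0$), this shows $\varphi(x)<\varphi(\sigma x)$. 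Since $\sigma x\in D_{x_0}^-\subset D\subset\{\varphi>0\}$ does \emph{not} immediately give $\varphi(x)<0$, the argument has to be bootstrapped: one first establishes the monotonicity/decay of $\varphi$ along rays transverse to $L$, using superharmonicity of $\varphi$ in $\Rn\setminus\overline D$ and the fact that $\varphi\to-\infty$ at infinity, and then uses the reflection inequality to propagate the correct sign. This is the standard mechanism of \cite{Fran,wolfgang} adapted here.

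The role of the specific angle $\arccos\frac1{\sqrt5}$ is the following: the sector condition (2) guarantees that a suitable cone with vertex $x_0$ and half-angle $\alpha$ around $\vec\nu(x_0)$ misses $D$ entirely, and hence that every point $x$ just outside $\overline D$ near $x_0$ can be ``seen'' from inside $D$ along a direction making a controlled angle with $\vec\nu(x_0)$. Tracking the constants in the comparison between $\log|x-y|$ and $\log|x-\sigma y|$ and in the estimate $|x|^2-|\sigma x|^2$, one finds that the argument closes precisely when $\cos^2\alpha\le\frac15$, i.e. $\alpha\le\arccos\frac1{\sqrt5}$; for convex domains condition (2) holds with $\alpha=\tfrac\pi2$ and (1),(3) are automatic, so convex domains are covered a fortiori.

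The main obstacle, as flagged in the introduction, is exactly the step $\varphi<0$ on $\Rn\setminus\overline D$: the naive minimum principle fails because $\varphi$ is only superharmonic there and diverges to $-\infty$, so one genuinely needs the reflection inequality across \emph{every} tangent line, and it is the simultaneous use of all three conditions in Definition \ref{def2} — positivity of $x_0\cdot\vec\nu(x_0)$ to handle the rotation term, the sector emptiness to get the quantitative angle, and $D_{x_0}^-\subset D$ to compare the potential with its reflection — that makes the estimate go through. Everything after that ($H(\varphi)=\chi_D$ a.e., hence \eqref{master1}) is immediate.
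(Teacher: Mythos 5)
Your reduction is correct and matches the paper: once one knows $D=\{\varphi>0\}$ and $\partial D=\{\varphi=0\}$, one has $H(\varphi)=\chi_D$ a.e.\ and \eqref{master1} is just the definition of $\varphi$ with $\psi$ rewritten as the logarithmic potential of $H(\varphi)$. You also correctly isolate the only nontrivial point, namely $\varphi<0$ on $\RR^2\setminus\overline D$. But the argument you offer for that point has a genuine gap. You reflect the \emph{evaluation} point $x$ across the tangent line at $x_0$ and derive an inequality $\varphi(x)<\varphi(\sigma x)$; as you yourself note, this does not give $\varphi(x)<0$, and you then appeal to a ``bootstrap'': first establish ``monotonicity/decay of $\varphi$ along rays transverse to $L$, using superharmonicity of $\varphi$ in $\RR^2\setminus\overline D$ and the fact that $\varphi\to-\infty$ at infinity.'' That is precisely the step that cannot be obtained by soft arguments — the paper points out explicitly that for a superharmonic function tending to $-\infty$ the minimum principle is inconclusive — and the monotonicity along normal rays is not a lemma you can borrow: it \emph{is} the whole content of the proposition. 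Nothing in your sketch actually produces it.

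What the paper does instead is a direct computation. For each $x_0\in\partial D$ set $g(t)=\varphi\bigl(x_0+t\vec{\nu}(x_0)\bigr)$; then $g'(t)=\Omega\bigl(t+x_0\cdot\vec{\nu}(x_0)\bigr)-\nabla\psi\bigl(x_0+t\vec{\nu}(x_0)\bigr)\cdot\vec{\nu}(x_0)$, and condition (1) with $\Omega<0$ makes the first term negative. The reflection is then applied to the \emph{integration variable} inside $\nabla\psi\cdot\vec{\nu}=\frac{1}{2\pi}\int_D\frac{(x_0+t\vec{\nu}-y)\cdot\vec{\nu}}{|x_0+t\vec{\nu}-y|^2}\,dy$: the part of $D$ below the tangent line contributes a nonnegative integrand, and the contributions of $D_{x_0}^{+}$ and its reflection $D_{x_0}^{-}\subset D$ (condition (3)) are paired, leaving a negative term proportional to $4\bigl[(x_0-y)\cdot\vec{\nu}\bigr]^2$ which is dominated by the positive term $t\int_{D_{x_0}^{-}}|x_0+t\vec{\nu}-y|^{-2}dy$ exactly when $1-5\bigl[\tfrac{x_0-y}{|x_0-y|}\cdot\vec{\nu}\bigr]^2\ge0$; condition (2) with $\alpha=\arccos\frac{1}{\sqrt5}$ supplies the bound $0\le\tfrac{x_0-y}{|x_0-y|}\cdot\vec{\nu}\le\frac{1}{\sqrt5}$ on $D_{x_0}^{-}$. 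Hence $\nabla\psi\cdot\vec{\nu}\ge0$, $g'<0$, and $g(t)<g(0)=0$ for $t>0$; condition (2) also guarantees the outward normal rays sweep out all of $\RR^2\setminus\overline D$. Your guess about where $\frac{1}{\sqrt5}$ comes from (constants in comparing $\log|x-y|$ with $\log|x-\sigma y|$) is therefore not the actual mechanism: it arises from $4c^2\le 1-c^2$. To repair your proposal you would need to replace the reflection of $x$ by this reflection of $y$ within $D$ and carry out the normal-derivative estimate; as written, the key inequality is asserted rather than proved.
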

The proof is an immediate consequence of the next lemma which allows to write 
$$
\psi(x)=\frac{1}{2\pi}\int_{\RR^2}\log|x-y|\, H(\varphi(y))\,dy.
$$
The lemma reads as follows.
\begin{lem}\label{lem1}
Let  $\Omega<0$ and $D$ be a domain as in the Proposition $\ref{prop04}$, then
$$
D=\big\{x\in\RR^2, \varphi(x)>0\big\},\quad \partial D=\big\{x\in\RR^2, \varphi(x)=0\big\}.
$$
In particular
$$
\forall x\in \RR^2,\quad \chi_D(x)=H(\varphi(x)).
$$

\end{lem}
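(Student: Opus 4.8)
The plan is to establish the two set inclusions that together give $D=\{\varphi>0\}$, and then to deduce the statements about the boundary and about $\chi_D=H(\varphi)$. Recall that inside $D$ one has $-\triangle\psi=\chi_D=1$, hence $\triangle\varphi=-\Omega\triangle\psi\cdot\tfrac{1}{?}$... more precisely $\triangle\varphi=2\Omega-\triangle\psi=2\Omega-1<0$ in $D$, so $\varphi$ is strictly superharmonic on $D$; while on $\RR^2\setminus\overline D$ we have $\triangle\psi=0$, so $\triangle\varphi=2\Omega<0$ there as well, i.e. $\varphi$ is superharmonic on the exterior too. On the whole plane $\varphi$ is continuous (since $\psi\in C^1$), and from the V-state equation \eqref{master} we know $\varphi\equiv 0$ on $\partial D$.

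First I would show $D\subset\{\varphi>0\}$. Since $\varphi$ is superharmonic in the bounded domain $D$, is continuous up to $\overline D$, and vanishes on $\partial D$, the minimum principle for superharmonic functions forces $\varphi>0$ throughout $D$ unless $\varphi\equiv 0$; the latter is impossible because $\triangle\varphi=2\Omega-1\ne 0$. This direction uses no geometry beyond $C^1$ and $\Omega<0$.

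The main obstacle — and the only place the geometric hypothesis $D\in\Sigma_{\arccos(1/\sqrt5)}$ enters — is the reverse inclusion, i.e. proving $\varphi(x)<0$ for every $x\notin\overline D$. Here the naive minimum principle fails: $\varphi$ is superharmonic on the unbounded set $\RR^2\setminus\overline D$, equals $0$ on the boundary, and tends to $-\infty$ at infinity (because $\psi(x)\sim \frac{|D|}{2\pi}\log|x|$ while $\frac12\Omega|x|^2\to-\infty$ dominates), so superharmonicity gives no lower barrier. The idea, following \cite{wolfgang}, is to run a pointwise reflection/comparison argument: fix $x_0\in\partial D$ and an exterior point $x$, reflect across the tangent line at the nearest boundary point, and compare the Newtonian-potential contribution of $D$ seen from $x$ with that seen from its reflected partner. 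Conditions (1)–(3) of Definition \ref{def2} are exactly what is needed so that the reflected set $D_{x_0}^-$ sits inside $D$ and so that the sector $\mathrm{Sec}_{x_0,\alpha}$ misses $D$; this lets one bound $\psi(x)$ from below by $\psi$ evaluated at an interior point, while the quadratic term $\frac12\Omega|x|^2$ is controlled using condition (1) ($x_0\cdot\vec\nu(x_0)\ge 0$, so moving outward increases $|x|^2$). The specific angle $\arccos(1/\sqrt5)$ is the threshold that makes the resulting inequality close. I expect this to be the technically heaviest step, requiring a careful splitting of the integral $\int_D\log|x-y|\,dA(y)$ into the part over $D_{x_0}^+$ (handled by reflection) and the part over $D\setminus D_{x_0}^+$ (handled by the sector condition), together with a monotonicity estimate on $\log$.

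Once both inclusions are in hand, $D=\{\varphi>0\}$ follows, and since $\varphi$ is continuous with $\varphi\equiv 0$ on $\partial D$ and $\varphi\ne 0$ off $\partial D$ by the two inclusions, we get $\partial D\subset\{\varphi=0\}$; conversely a zero of $\varphi$ cannot lie in the open set $\{\varphi>0\}=D$ nor in the exterior $\{\varphi<0\}$, so $\{\varphi=0\}\subset\partial D$, giving equality. Finally, for $x\in D$ we have $\varphi(x)>0$ so $H(\varphi(x))=1=\chi_D(x)$; for $x\notin\overline D$ we have $\varphi(x)<0$ so $H(\varphi(x))=0=\chi_D(x)$; the boundary $\partial D$ has Lebesgue measure zero (being a $C^1$ curve), so the identity $\chi_D=H(\varphi)$ holds pointwise a.e., which is all that is needed for the integral equation \eqref{master1}. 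This last bookkeeping is routine.
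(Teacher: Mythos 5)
Your first inclusion ($D\subset\{\varphi>0\}$ via superharmonicity of $\varphi$ in $D$ and the strong maximum/minimum principle) is correct and is exactly the paper's argument, and your closing bookkeeping (equality of the boundary with the zero set, and the a.e.\ identity $\chi_D=H(\varphi)$ sufficing for \eqref{master1}) is fine. The problem is the reverse inclusion $\RR^2\setminus\overline D\subset\{\varphi<0\}$: this is the entire content of the lemma, it is the only place the hypothesis $D\in\Sigma_{\arccos(1/\sqrt5)}$ is used, and you leave it as an unexecuted plan (``I expect this to be the technically heaviest step\dots''). Nothing in your text derives the threshold $1/\sqrt5$ or establishes any actual inequality for $\varphi$ at an exterior point, so as written there is a genuine gap at the decisive step.

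Concretely, what is missing is the following chain, which is how the paper closes the argument. For each $x_0\in\partial D$ one shows that $g(t)=\varphi(x_0+t\vec{\nu}(x_0))$ is \emph{strictly decreasing} on $[0,\infty)$; since $g(0)=0$ by the V-state equation, this gives $\varphi<0$ on the outward normal ray, and condition $(2)$ of Definition \ref{def2} guarantees that these rays cover $\RR^2\setminus\overline D$ (a coverage statement you gesture at via ``the nearest boundary point'' but never state or justify). By condition $(1)$ and $\Omega<0$, the monotonicity of $g$ reduces to proving $\nabla\psi(x_0+t\vec{\nu}(x_0))\cdot\vec{\nu}(x_0)\ge0$ for all $t\ge0$. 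This is obtained by splitting $D=D_{x_0}^{+}\cup D_{x_0}^{-}\cup\hat D_{x_0}$, discarding $\hat D_{x_0}$ by the sign of $(x_0-y)\cdot\vec{\nu}(x_0)$ there, pairing $D_{x_0}^{+}$ with $D_{x_0}^{-}$ through the measure-preserving reflection (legitimate because condition $(3)$ puts $D_{x_0}^{-}\subset D$), and then combining the identity $\inf_{t\ge0}|x_0-t\vec{\nu}(x_0)-y|^2=|x_0-y|^2-[(x_0-y)\cdot\vec{\nu}(x_0)]^2$ with the sector condition, which yields $\big[\tfrac{x_0-y}{|x_0-y|}\cdot\vec{\nu}(x_0)\big]^2\le\tfrac15$ on $D_{x_0}^{-}$ --- this is exactly where $\arccos(1/\sqrt5)$ enters, and you never account for it. Note also that your sketch proposes a \emph{value} comparison (``bound $\psi(x)$ from below by $\psi$ evaluated at an interior point''): even if carried out, comparing $\varphi(x_0+t\vec{\nu}(x_0))$ with $\varphi(x_0-t\vec{\nu}(x_0))$ only bounds the former by a quantity that is positive when $x_0-t\vec{\nu}(x_0)\in D$, so it cannot conclude; the argument genuinely needs the sign of the normal derivative of $\psi$ along the whole outward ray, i.e.\ $g'(t)<0$ for all $t\ge0$, not a pointwise reflection of values.
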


\begin{proof}
The proof will be done in two steps. In the first one we show that $D\subset \big\{x\in\RR^2, \varphi(x)>0\big\}$ which does not require any geometric constraint on the domain and we use only  the maximum principle applied to $\varphi$.  However the converse inclusion is more subtle and it is not clear wether it can be proven without any additional geometric properties for the domain. To get the first inclusion  we observe that  $\varphi$ satisfies  in the distribution sense
\begin{equation}\label{T1}
\Delta \varphi=2\Omega-\chi_D\le0,\quad \hbox{in}\quad \RR^2.
\end{equation}
Taking the restriction of this equation to the domain $D$ we get in the classical sense the Dirichlet problem
\begin{equation}\label{B}
\left\{ \begin{array}{ll}
\Delta \varphi<0\quad \hbox{in}\quad D&\\
\varphi=0,\quad \partial D.
\end{array} \right. 
\end{equation} 
Applying the strong maximum principle we find 
$$
\forall x\in D, \quad \varphi(x)>0
$$
and therefore we obtain the inclusion
\begin{equation}\label{inc1}
D\subset\big\{x\in \RR^2, \varphi(x)>0\big\}.
\end{equation}
We should now prove the converse which follows from 
\begin{equation}\label{incc1}
\RR^2\backslash \overline{D}\subset\big\{x\in \RR^2, \varphi(x)<0\big\}.
\end{equation}
To get this inclusion  we shall first give an elementary proof when the domain $D$ is assumed to be convex which is in fact belongs to the class $\Sigma_\alpha$ and come back later to the general case.
Since $\psi$ has a logarithmic growth at infinity and $\Omega<0$ then
$$
\lim_{|x|\to+\infty}\varphi(x)=-\infty.
$$
To get the inclusion it suffices to show that $\varphi$ has no critical points outside $\overline{D}$. Assume that there exits $x_0\notin\overline{D}$ such that $\nabla\varphi(x_0)=0.$ It is easy to see that
\begin{eqnarray*}
\nabla\varphi(x_0)&=&\Omega x_0-\frac{1}{2\pi}\int_{D}\frac{x_0-y}{|x_0-y|^2}dy\\
&=&0.
\end{eqnarray*}
As ${D}$ is convex one can find, using the separation theorem, a unit vector $e$ (for example the outward-pointing normal unit vector) such that 
$$
(x_0-y)\cdot e>0,\quad \forall y\in D.
$$
Consequently we get
\begin{eqnarray*}
\nabla\varphi(x_0)\cdot e&=&\Omega\, x_0\cdot e-\frac{1}{2\pi}\int_{D}\frac{(x_0-y)\cdot e}{|x_0-y|^2}dy\\
&<&\Omega\, x_0\cdot e\\
&<&0,
\end{eqnarray*}
where we have used in the last line the fact that $0\in D$ and  $x_0\cdot e>0.$ This gives the desired result in the case of convex domains. Now let us discuss the general case of domains belonging to the class $\Sigma_{\arccos\frac{1}{\sqrt{5}}}$. We claim that for any $x_0\in \partial D$ the function $g:[0,+\infty)\to \RR$  defined by
\begin{eqnarray*}
g(t)&\triangleq&\varphi\big(x_0+t\vec{\nu}(x_0)\big)\\
&=&\mu+ \frac12\Omega|x_0+t\vec{\nu}(x_0)|^2-\psi\big(x_0+t\vec{\nu}(x_0)\big)
\end{eqnarray*}
is strictly decreasing. Indeed, by differentiation we find
$$
g^\prime(t)=\Omega(t+x_0\cdot\vec{\nu}(x_0))-\nabla\psi\big(x_0+t\vec{\nu}(x_0)\big)\cdot\vec{\nu}(x_0).
$$
Using the assumption $(1)$ of the  Definition \ref{def2} and the fact that $\Omega<0$ we obtain
\begin{equation}\label{ineq1}
\forall t\geq0,\quad g^\prime(t)< -\nabla\psi\big(x_0+t\vec{\nu}(x_0)\big)\cdot\vec{\nu}(x_0).
\end{equation}
By straightforward computations we get
\begin{eqnarray}\label{ineq45}
\nonumber \nabla\psi\big(x_0+t\vec{\nu}(x_0)\big)\cdot\vec{\nu}(x_0)&=&\frac{1}{2\pi}\int_{D}\frac{\big(x_0+t\vec{\nu}(x_0)-y\big)\cdot\vec{\nu}(x_0)}{|x_0+t\vec{\nu}(x_0)-y|^2}dy\\
&=&\frac{t}{2\pi}\int_{D}\frac{1}{|x_0+t\vec{\nu}(x_0)-y|^2}dy+\frac{1}{2\pi}\int_{D}\frac{\big(x_0-y\big)\cdot\vec{\nu}(x_0)}{|x_0+t\vec{\nu}(x_0)-y|^2}dy.
\end{eqnarray}
From  the Definition \ref{def2} we get the partition $D=D_{x_0}^{-}\cup D_{x_0}^{+}\cup \hat{D}_{x_0}$ with $\hat{D}_{x_0}$ the complement of $D_{x_0}^{-}\cup D_{x_0}^{+}$ in $D.$ Therefore the last integral term may be written in the form
\begin{eqnarray}\label{ineq46}
\nonumber \int_{D}\frac{\big(x_0-y\big)\cdot\vec{\nu}(x_0)}{|x_0+t\vec{\nu}(x_0)-y|^2}dy&=&\int_{\hat{D}_{x_0}}\frac{\big(x_0-y\big)\cdot\vec{\nu}(x_0)}{|x_0+t\vec{\nu}(x_0)-y|^2}dy+\int_{D_{x_0}^{-}\cup D_{x_0}^{+}}\frac{\big(x_0-y\big)\cdot\vec{\nu}(x_0)}{|x_0+t\vec{\nu}(x_0)-y|^2}dy\\
&\geq&\int_{D_{x_0}^{-}\cup D_{x_0}^{+}}\frac{\big(x_0-y\big)\cdot\vec{\nu}(x_0)}{|x_0+t\vec{\nu}(x_0)-y|^2}dy\triangleq I_{x_0},
\end{eqnarray}
where we have used the fact that $\big(x_0-y\big)\cdot\vec{\nu}(x_0)\geq0$ for any $y\in \hat{D}_{x_0}$. We shall use the change of variables 
$$z\in D_{x_0}^{-}\mapsto y=z-2\big[(z-x_0)\cdot\vec{\nu}(x_0)\big]\vec{\nu}(x_0)\in D_{x_0}^{+}
$$ which is a diffeomorphism preserving Lebesgue measure and therefore,
\begin{eqnarray*}
\int_{D_{x_0}^{+}}\frac{\big(x_0-y\big)\cdot\vec{\nu}(x_0)}{|x_0+t\vec{\nu}(x_0)-y|^2}dy&=&-\int_{D_{x_0}^{-}}\frac{\big(x_0-z\big)\cdot\vec{\nu}(x_0)}{|x_0-t\vec{\nu}(x_0)-z|^2}dz.
\end{eqnarray*}
Consequently,
\begin{eqnarray*}
I_{x_0}&=&\int_{ D_{x_0}^{-}}{\big(x_0-y\big)\cdot\vec{\nu}(x_0)}\big(\frac{1}{|x_0+t\vec{\nu}(x_0)-y|^2}-\frac{1}{|x_0-t\vec{\nu}(x_0)-y|^2}\big)dy\\
&=&-4t\int_{ D_{x_0}^{-}}\frac{\big[\big(x_0-y\big)\cdot\vec{\nu}(x_0)\big]^2}{|x_0+t\vec{\nu}(x_0)-y|^2|x_0-t\vec{\nu}(x_0)-y|^2}dy.
\end{eqnarray*}
Putting together the preceding identity with \eqref{ineq46} and \eqref{ineq45} we obtain
\begin{eqnarray*}
\nabla\psi\big(x_0+t\vec{\nu}(x_0)\big)\cdot\vec{\nu}(x_0)&\geq& \frac{t}{2\pi}\int_{ D_{x_0}^{-}}\frac{1}{|x_0+t\vec{\nu}(x_0)-y|^2}\Big(1-4\frac{\big[\big(x_0-y\big)\cdot\vec{\nu}(x_0)\big]^2}{|x_0-t\vec{\nu}(x_0)-y|^2}\Big)dy.
\end{eqnarray*}
Easy computations show that
$$
\inf_{t\geq0}|x_0-t\vec{\nu}(x_0)-y|^2=|x_0-y|^2-\big[(x_0-y)\cdot\vec{\nu}(x_0)\big]^2
$$
and therefore
\begin{eqnarray*}
\nabla\psi\big(x_0+t\vec{\nu}(x_0)\big)\cdot\vec{\nu}(x_0)&\geq& \frac{t}{2\pi}\int_{ D_{x_0}^{-}}\frac{|x_0-y|^2}{|x_0+t\vec{\nu}(x_0)-y|^2}\Big(\frac{1-5{\big[\frac{x_0-y}{|x_0-y|}\cdot\vec{\nu}(x_0)\big]^2}}{|x_0-y|^2-\big[(x_0-y)\cdot\vec{\nu}(x_0)\big]^2}\Big)dy.
\end{eqnarray*}
Recall that for $y\in D_{x_0}^-$
$$
\frac{x_0-y}{|x_0-y|}\cdot\vec{\nu}(x_0)=\frac{z-x_0}{|z-x_0|}\cdot\vec{\nu}(x_0),\quad{\rm{with}}\quad z= y-2\big[(y-x_0)\cdot\vec{\nu}(x_0)\big]\vec{\nu}(x_0)\in D_{x_0}^+.
$$
According to the definition of $D_{x_0}^+$ and  the condition  $(2)$ of the Definition \ref{def2} one sees that
$$
0\leq\frac{(z-x_0)}{|x_0-z|}\cdot\vec{\nu}(x_0)\le \frac{1}{\sqrt{5}}
$$
and therefore 
$$
\forall y\in D_{x_0}^{-},\quad 0\leq\frac{(x_0-y)}{|x_0-y|}\cdot\vec{\nu}(x_0)\le \frac{1}{\sqrt{5}}\cdot
$$
This  implies that
$$
\forall t\geq0,\quad \nabla\psi\big(x_0+t\vec{\nu}(x_0)\big)\cdot\vec{\nu}(x_0)\geq0.
$$
Combining this inequality with \eqref{ineq1} one gets
$$
\forall t\geq 0,\quad g^\prime(t)<0.
$$
It follows that $g$ is strictly decreasing and 
$$
\forall t>0,\quad g(t)<g(0)=0.
$$
In the last equality we have used the equation of the V-states, that is, $\varphi(x)=0,\, \forall \,x\in \partial D$.
Coming back to the definition of $g$ this proves that
$$
\forall t>0,\quad \varphi(x_0+t\vec{\nu}(x_0))<0.
$$
According to the condition $(2)$ of the Definition \ref{def2} we get
$$
\RR^2\backslash \overline{D}=\cup_{t>0, x_0\in\partial D}]x_0,x_0+t\vec{\nu}(x_0)].
$$
This allows to conclude that 
$$
\forall x\in \RR^2\backslash \overline{D},\quad \, \varphi(x)<0
$$
and therefore we get the desired inclusion \eqref{incc1}. The proof of the lemma is now complete.
\end{proof}
\subsection{Moving plane method}
We shall now discuss the symmetry property of any solution of the integral equation \eqref{master1}. This be done by using the moving plane method in the spirit of the papers \cite{Fran,wolfgang}. Our result reads as follows.
\begin{prop}\label{prod1}
Let $\Omega<0$ and $\varphi$ be a solution of \eqref{master1}. Then $\varphi$ is radial and strictly decreasing with respect to the radial variable $r.$
\end{prop}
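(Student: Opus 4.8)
The plan is to run the classical moving plane method (in the style of Gidas–Ni–Nirenberg, as adapted to integral equations by Fraenkel and by Reichel) on the nonlinear integral equation \eqref{master1}. Fix an arbitrary direction, say $e_1$, and for $\lambda\in\RR$ consider the hyperplane $T_\lambda=\{x_1=\lambda\}$, the half-space $\Sigma_\lambda=\{x_1>\lambda\}$, and the reflection $x^\lambda=(2\lambda-x_1,x_2)$. Define the difference $w_\lambda(x)=\varphi(x)-\varphi(x^\lambda)$ on $\Sigma_\lambda$. The first task is to record the integral identity satisfied by $w_\lambda$: splitting the integral in \eqref{master1} over $\Sigma_\lambda$ and its reflection and using the change of variables $y\mapsto y^\lambda$, one gets
\begin{equation*}
w_\lambda(x)=\tfrac12\Omega\big(|x|^2-|x^\lambda|^2\big)-\frac{1}{2\pi}\int_{\Sigma_\lambda}\Big(\log\frac{|x-y|}{|x^\lambda-y|}\Big)\big(H(\varphi(y))-H(\varphi(y^\lambda))\big)\,dy.
\end{equation*}
Note $|x|^2-|x^\lambda|^2=4\lambda(\lambda-x_1)\le 0$ on $\Sigma_\lambda$ when $\lambda\ge 0$, and $\Omega<0$, so the quadratic term is $\ge 0$; also for $x,y\in\Sigma_\lambda$ one has $|x-y|\le|x^\lambda-y|$, so the logarithmic kernel is $\le 0$. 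Thus wherever $H(\varphi(y))-H(\varphi(y^\lambda))\ge 0$ the second term is a nonnegative contribution, and monotonicity of $H$ means this sign is governed by the sign of $w_\lambda$. This is the structural positivity that makes the method work for $\Omega<0$ (and would fail for $\Omega>0$ because the quadratic term flips sign).

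Next I would carry out the two usual phases. \emph{Start}: since $D$ is bounded, for $\lambda$ large enough $\Sigma_\lambda\cap D=\varnothing$, so $H(\varphi(y))=0$ there; combined with $\varphi<0$ outside $\overline D$ (Lemma \ref{lem1}) one checks directly that $w_\lambda\le 0$ on $\Sigma_\lambda$, in fact the reflected set $\Sigma_\lambda$ meets $D$ in the empty set and $\varphi(x^\lambda)\ge\varphi(x)$ follows. Then set $\lambda_0=\inf\{\lambda\ge 0:\ w_{\lambda'}\le 0 \text{ on }\Sigma_{\lambda'}\ \forall\lambda'\ge\lambda\}$ and suppose for contradiction $\lambda_0>0$. \emph{Continuation}: using the integral identity above together with the strong positivity of the kernel, show first that $w_{\lambda_0}<0$ in the interior of $\Sigma_{\lambda_0}$ (a strong-maximum-principle type statement: if $w_{\lambda_0}(x_*)=0$ at an interior point, the identity forces $H(\varphi(y))=H(\varphi(y^{\lambda_0}))$ a.e., hence $w_{\lambda_0}\equiv 0$, which is incompatible with $\varphi\to-\infty$ unless $D$ is already symmetric about $T_{\lambda_0}$ — but $0\in D$ and $\lambda_0>0$ rule that out since the reflection of $D\cap\Sigma_{\lambda_0}$ would have to contain $0$). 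Then use continuity in $\lambda$ and the compactness of $\overline{D}$ to push $\lambda$ slightly below $\lambda_0$ while keeping $w_\lambda\le 0$, contradicting minimality. Hence $\lambda_0=0$: $\varphi(x)\le\varphi(x^0)$ on $\{x_1>0\}$, i.e. $\varphi$ is non-increasing in $x_1$ on that half and, running the same argument from the left, symmetric under $x_1\mapsto -x_1$. Since $e_1$ was arbitrary, $\varphi$ is symmetric about every hyperplane through the origin, hence radial; and the monotonicity statements in each direction give that $\varphi$ is non-increasing in $r$. Upgrading to \emph{strictly} decreasing follows from the strict positivity of the log-kernel once one knows $H(\varphi(\cdot))=\chi_D$ is not a.e. constant (which it isn't, $D$ being bounded with nonempty complement): on any radial segment where $\varphi$ were constant the identity again forces a contradiction with $\varphi\to-\infty$.

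The main obstacle I anticipate is making the strong-maximum-principle and the continuation step rigorous at the level of the integral equation rather than the PDE, because $\varphi$ is only superharmonic (not harmonic) outside $D$ and the nonlinearity $H$ is discontinuous. Concretely: (i) one must be careful that $w_\lambda$ need not be continuous where $\varphi$ crosses $0$, so the ``if $w_{\lambda_0}$ vanishes somewhere then it vanishes identically'' dichotomy has to be argued from the integral representation and the sign of the kernel, exploiting that the set $\{H(\varphi(y))\ne H(\varphi(y^{\lambda_0}))\}$ has positive measure unless $D$ is symmetric about $T_{\lambda_0}$; and (ii) in the continuation step the quantity $4\lambda(\lambda-x_1)$ degenerates as $\lambda\to\lambda_0$ on part of $\Sigma_\lambda$, so one relies on the strict negativity of the kernel together with the fact (from Lemma \ref{lem1}) that $D$ is compactly contained in $\{\varphi>0\}$ to get a uniform gain. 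Handling the boundary behavior near $T_{\lambda_0}\cap\partial D$ — where one genuinely uses that $\partial D\in C^1$ and that $D$ is convex-like so that reflected points stay outside $D$ on the relevant range — is the delicate technical heart; everything else is the standard bookkeeping of the moving plane scheme.
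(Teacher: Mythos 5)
Your outline is the paper's own strategy: reflect across $T_\lambda$, write the integral identity for the difference, and exploit that for $\Omega<0$ the rotation term and the log-kernel cooperate. (One algebra slip: $|x|^2-|x^\lambda|^2=4\lambda(x_1-\lambda)$, not $4\lambda(\lambda-x_1)$; the sign mechanism you describe survives once this is straightened out, and your start at large $\lambda$ does go through via the identity, since $\Sigma_\lambda\cap D=\varnothing$ forces $H(\varphi(y))-H(\varphi(y^\lambda))\le 0$ on $\Sigma_\lambda$.) The genuine gaps are in the continuation step, which you defer as ``standard bookkeeping'' but which is where the proof actually lives. First, you invoke ``continuity in $\lambda$ and the compactness of $\overline D$'' to push past $\lambda_0$, but the inequality $w_\lambda\le 0$ must be preserved on the whole unbounded half-space, not on $\overline D$: nothing in your sketch prevents bad points from escaping to infinity as $\lambda\downarrow\lambda_0$. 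You need a far-field estimate, uniform in $\lambda\ge\lambda_0$, asserting $w_\lambda<0$ for $|x|\ge R(\lambda_0)$; the paper extracts this from the expansion $\varphi(x)=\mu+\tfrac12\Omega|x|^2-\tfrac{|D|}{2\pi}\log|x|+h(x)$ with $h=O(|x|^{-2})$, $\nabla h=O(|x|^{-3})$, the decay of $h$ using that $0$ is the barycenter of $D$ -- an ingredient entirely absent from your proposal. Second, once the limiting bad point lands on $T_{\lambda_0}$ (where $w_{\lambda_0}\equiv 0$), you need a Hopf-type strict derivative estimate, $\partial_{x_1}w_{\lambda_0}<0$ on $T_{\lambda_0}$, obtained by differentiating the integral identity, with the strict sign supplied by the term $2\Omega\lambda_0$; this is what converts $w_{\lambda_n}(x_n)>0$ into a contradiction via a difference quotient, and your sketch never produces it.

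Conversely, several difficulties you flag as the ``delicate technical heart'' are not there. The function $\varphi$ is the sum of a polynomial and a log-potential of a bounded compactly supported density, hence $C^1$, so $w_\lambda$ is continuous and no care is needed where $\varphi$ crosses $0$. No strong-maximum-principle dichotomy is required: once $w_\lambda\le0$ on $\Sigma_\lambda$, the identity immediately gives the quantitative bound $w_\lambda(x)\le 2\Omega\lambda(x_1-\lambda)<0$ in the open half-space, directly from the rotation term -- this is precisely the simplification that $\Omega<0$ buys and that your sketch does not use. Finally, the convexity-type hypotheses on $D$ play no role whatsoever in this proposition; they are consumed entirely in Lemma~\ref{lem1} to establish \eqref{master1}, and the moving plane argument then runs on the integral equation alone, with no need for ``reflected points to stay outside $D$.''
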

The result of Theorem \ref{thm1} follows easily from this proposition because the level sets of $\varphi$ are circles centered at zero and   $\partial D\subset \varphi^{-1}(\{0\})$.
As we can see Proposition \ref{prod1} is nothing but the second part $(2)$ of Proposition \ref{prop4}. \\
Next we shall establish a list of  auxiliary results  needed  later in the proof of \mbox{Proposition \ref{prop4}.} \\
{\it Notation:} For $\lambda>0$ we introduce the sets
$$
H_\lambda=\Big\{x=(x_1,x_2)\in\RR^2, x_1<\lambda\Big\}, \quad T_\lambda=\Big\{x=(\lambda,x_2)\in\RR^2, x_2\in \RR\Big\}
$$
and we define for any $x=(x_1,x_2)\in \RR^2$
$$\quad x_\lambda\triangleq(2\lambda-x_1,x_2),\quad \varphi_\lambda(x)\triangleq\varphi(x)-\varphi(x_\lambda).
$$
Our main goal now is to  prove the following result.
\begin{prop}\label{prop2}
There exists $\lambda^\star>0$ such that for any $\lambda\geq\lambda^\star$,
$$
\varphi_\lambda(x)>0,\quad\forall x\in H_\lambda.
$$
In addition,  $\forall \,\lambda_0>0$ there exists $R(\lambda_0)>0$ such that $\forall\, \lambda\geq\lambda_0$
$$
 \forall \, x\in H_\lambda, \quad |x|\geq R(\lambda_0)\Longrightarrow \varphi_\lambda(x)>0.
$$
\end{prop}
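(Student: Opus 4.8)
The plan is to read off $\varphi_\lambda$ directly from \eqref{master1} and exploit both the sign of $\Omega$ and the fact that $H(\varphi)$ is supported in a bounded set. Write $\mathcal O=\{\varphi>0\}$, so that $H(\varphi)=\chi_{\mathcal O}$; in the situation of interest $\mathcal O=D$ by Lemma \ref{lem1}, and in any case $\mathcal O$ is bounded because $\Omega<0$ while the logarithmic potential in \eqref{master1} grows at most like $\log|x|$, so that $\varphi(x)\to-\infty$ as $|x|\to\infty$. Fix $\rho>0$ with $\overline{\mathcal O}\subset B(0,\rho)$. Subtracting \eqref{master1} at $x_\lambda$ from \eqref{master1} at $x$, the constant $\mu$ drops out and, since $|x|^2-|x_\lambda|^2=4\lambda(x_1-\lambda)$,
\[
\varphi_\lambda(x)=2\Omega\lambda(x_1-\lambda)+\frac{1}{2\pi}\int_{\mathcal O}\log\frac{|x_\lambda-y|}{|x-y|}\,dy,\qquad x\in H_\lambda .
\]
On $H_\lambda$ one has $x_1<\lambda$, so the first term equals $2|\Omega|\lambda(\lambda-x_1)>0$, and everything reduces to controlling the logarithmic integral.

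For the first assertion, take any $\lambda^\star>\rho$. For $\lambda\ge\lambda^\star$ we have $\mathcal O\subset B(0,\rho)\subset H_\lambda$, hence for $x\in H_\lambda$ and $y\in\mathcal O$ both $\lambda-x_1>0$ and $\lambda-y_1>0$; the identity $|x_\lambda-y|^2-|x-y|^2=4(\lambda-x_1)(\lambda-y_1)$ then gives $|x_\lambda-y|\ge|x-y|$, so the integrand is nonnegative and $\varphi_\lambda(x)>0$ on all of $H_\lambda$.

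For the uniform statement near infinity, fix $\lambda_0>0$. If $\lambda>\rho$ the previous paragraph already gives $\varphi_\lambda>0$ everywhere on $H_\lambda$, so it suffices to treat $\lambda_0\le\lambda\le\rho$; for such $\lambda$ and $y\in\mathcal O$ we have $|\lambda-y_1|\le 2\rho$, while $\lambda-x_1\le\rho+|x|$ and $|x-y|\ge|x|-\rho$. Hence $\big|\tfrac{4(\lambda-x_1)(\lambda-y_1)}{|x-y|^2}\big|\le\tfrac12$ once $|x|\ge R_0(\rho)$, and then $|\log(1+s)|\le2|s|$ yields $\big|\log\frac{|x_\lambda-y|}{|x-y|}\big|\le\frac{8\rho(\lambda-x_1)}{(|x|-\rho)^2}$. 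Integrating over $\mathcal O$ and comparing with the quadratic term gives
\[
\varphi_\lambda(x)\ \ge\ (\lambda-x_1)\Big(2|\Omega|\lambda_0-\frac{C}{(|x|-\rho)^2}\Big),\qquad C=C(\rho,|\mathcal O|),
\]
which is positive, since $\lambda-x_1>0$ on $H_\lambda$, as soon as $|x|\ge R(\lambda_0):=\max\big\{R_0(\rho),\,\rho+\sqrt{C/(2|\Omega|\lambda_0)}\,\big\}$. This establishes both statements.

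I do not anticipate a genuine obstacle here: the nonlinearity being supported in a bounded set makes the logarithmic term essentially harmless, and the only mildly delicate point is that the estimate must be uniform over the unbounded range $\lambda\in[\lambda_0,\infty)$ — which is why it is convenient to dispose of the regime $\lambda>\rho$ (where $\mathcal O\subset H_\lambda$) separately and, in the remaining compact range of $\lambda$, to factor the common factor $(\lambda-x_1)$ out of both the quadratic and the logarithmic contributions so that the case ``$x_1$ close to $\lambda$'' is absorbed automatically.
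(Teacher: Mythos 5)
Your proof is correct, and it takes a genuinely different route from the paper's. The paper first invokes Lemma 6 of \cite{wolfgang} (together with the normalization of the barycenter at the origin) to expand $\varphi(x)=\mu+\frac12\Omega|x|^2-\frac{|D|}{2\pi}\log|x|+h(x)$ with $|h(x)|\le C|x|^{-2}$ and $|\nabla h(x)|\le C|x|^{-3}$; it then discards the term $\frac{|D|}{2\pi}\log(|x_\lambda|/|x|)$ by sign, controls $h(x)-h(x_\lambda)$ by the mean value theorem, and, for the first claim, treats the remaining compact region $|x|\le R$ by taking $\lambda$ so large that $\varphi(x_\lambda)<-2M$ with $M=\sup_{|x|\le R}|\varphi|$. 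You instead work directly with the integral representation \eqref{master1}: the identity $|x_\lambda-y|^2-|x-y|^2=4(\lambda-x_1)(\lambda-y_1)$ makes the kernel difference nonnegative as soon as the support of $H(\varphi)$ lies in $H_\lambda$, which disposes of \emph{all} of $H_\lambda$ for every $\lambda>\rho$ at once (no compactness argument needed), while for $\lambda\in[\lambda_0,\rho]$ an elementary bound on $\log\big(|x_\lambda-y|/|x-y|\big)$ extracts the common factor $(\lambda-x_1)$ so that the quadratic term $2|\Omega|\lambda(\lambda-x_1)$ dominates uniformly for $|x|$ large. Your version is more self-contained (no appeal to the far-field expansion of the potential nor to the barycenter normalization) and yields an explicit $\lambda^\star$. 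One minor remark: the parenthetical claim that ``in any case $\mathcal{O}$ is bounded'' is not needed and is slightly circular as phrased; in the setting of the paper one has $H(\varphi)=\chi_D$ with $D$ bounded by Lemma \ref{lem1}, and that is the reading you should keep.
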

\begin{proof}
Using Lemma  $6$ of \cite{wolfgang} together with the fact that  $0$ is the barycenter of $D$  one may write 
$$
\forall x\in \RR^2\backslash\{0\},\quad \varphi(x)=\mu+\frac12\Omega|x|^2-\frac{1}{2\pi}|D|\log|x|+h(x),
$$
with $h$ being a  smooth function outside $D$ and satisfies the asymptotic behavior: there exists $R>0$ large enough such that
$$
\forall |x|\geq R,\quad | h(x)|\le C|x|^{-2};\quad |\nabla h(x)|\le C|x|^{-3}.
$$
From   the elementary fact $|x_\lambda|>|x|$ for $x\in H_\lambda$ we shall get  
\begin{eqnarray*}
\varphi_\lambda(x)&=&\frac12\Omega (|x|^2-|x_\lambda|^2)+\frac{1}{2\pi}|D|\log\big({|x_\lambda|}/{|x|}\big)+h(x)-h(x_\lambda)\\
&> &-2\lambda\Omega(\lambda-x_1)+h(x)-h(x_\lambda).
\end{eqnarray*}
Using the mean value theorem we obtain  for $|x|\geq R,$
\begin{eqnarray*}
|h(x)-h(x_\lambda)|&\le&|x-x_\lambda|\sup_{y\in [x,x_\lambda]}|\nabla h(y)|\\
&\le& C(\lambda-x_1)|x|^{-3}\\
&\le& C(\lambda-x_1) R^{-3}.
\end{eqnarray*}
Hence we get for  $x\in H_\lambda$ with $|x|\geq R$
\begin{eqnarray*}
\varphi_\lambda(x)
&>&(\lambda-x_1)\big(-C R^{-3}-2\lambda\Omega\big).
\end{eqnarray*}
We shall now  see how to  deduce from this inequality the second claim of the proposition and the first claim is postponed later to the end of the proof. Let $\lambda_0>0$ and $\lambda\geq\lambda_0$ and take $x\in H_\lambda$ with $|x|\geq R$. Then it is plain that,
\begin{eqnarray*}
\varphi_\lambda(x)
&>&(\lambda-x_1)\big(-C R^{-3}-2\lambda_0\Omega\big).
\end{eqnarray*}
Now we choose  $R$ such that 
$$
\lambda_0=-\frac{C}{2\Omega} R^{-3}
$$
which  guarantees that $\varphi_\lambda(x)>0$ and this concludes the proof  of the  second part of Proposition \ref{prop2}. Now let us come back to the proof  of the first claim of the proposition. For this aim, it suffices to check  in the preceding claim that the inequality   
$
\varphi_\lambda(x)>0
$
remains true for any $x\in H_\lambda$ provided that $\lambda$ is sufficiently large.
Now let $|x|\le R$ and set
$$
M\triangleq \sup_{|x|\le  R}|\varphi(x)|.
$$
From the asymptotic behavior of $\varphi$ we see that
$$
\lim_{|x|\to\infty}\varphi(x)=-\infty.
$$
In particular there exists $A>0$ such that 
$$
|x|\geq A\Longrightarrow \varphi(x)<-2M.
$$
It is easy to find $\lambda_1$ depending only on $R$ and $A$ such that
\begin{eqnarray*}
\forall \lambda\geq \lambda_1,\quad \forall |x|\le R&\Longrightarrow& |x_\lambda|\geq A\\
&\Longrightarrow&\varphi_\lambda(x)>M.
\end{eqnarray*}
Set $\lambda^\star=\max\{\lambda_0,\lambda_1\}$ then for any $\lambda\geq\lambda^\star$ we obtain
$$
\forall x\in H_\lambda,\quad \varphi_\lambda(x)>0.
$$
This concludes the proof of  Proposition \ref{prop2}.
\end{proof}
The next result deals with a continuation principle very useful to prove the strictly monotonicity of $\varphi.$
\begin{prop}\label{prop3}
Let $\lambda>0$ and assume that $\varphi_\lambda\geq0$ in $H_\lambda$. Then
$$
\varphi_\lambda>0\quad\hbox{in}\quad H_\lambda\quad \hbox{and}\quad \partial_{x_1}\varphi_\lambda(x)<0,\, \forall\, x\in T_\lambda.
$$
\end{prop}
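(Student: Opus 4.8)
The plan is to exploit the explicit structure of equation \eqref{master1} to derive a sign condition on $\varphi_\lambda$ via a strong maximum principle argument, and then to recover the boundary derivative statement through Hopf's lemma. First I would compute $\Delta\varphi_\lambda$ in $H_\lambda$. From \eqref{master1} we have $\Delta\varphi = 2\Omega - H(\varphi)$ in the distributional sense, so that
\begin{equation*}
\Delta\varphi_\lambda(x) = \Delta\varphi(x) - \Delta\varphi(x_\lambda) = -\big(H(\varphi(x)) - H(\varphi(x_\lambda))\big),\quad x\in H_\lambda,
\end{equation*}
since the constant $2\Omega$ cancels and the reflection $x\mapsto x_\lambda$ is volume preserving. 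The key monotonicity observation is that on the set where $\varphi_\lambda\ge 0$, i.e.\ where $\varphi(x)\ge\varphi(x_\lambda)$, one has $H(\varphi(x))\ge H(\varphi(x_\lambda))$, hence $\Delta\varphi_\lambda\le 0$ in $H_\lambda$ under our hypothesis; in other words $\varphi_\lambda$ is superharmonic there. Combined with $\varphi_\lambda = 0$ on $T_\lambda$ and $\varphi_\lambda\ge 0$ in $H_\lambda$, the function $\varphi_\lambda$ is a nonnegative superharmonic function on the half-plane vanishing on its bounding hyperplane.

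Next I would invoke the strong maximum principle: a nonnegative superharmonic function on a connected open set is either strictly positive or identically zero. Since $\varphi_\lambda$ is not identically zero — this uses that $\varphi\to-\infty$ at infinity (from Lemma $6$ of \cite{wolfgang} as in the previous proof) so that $\varphi$ cannot be symmetric about $T_\lambda$ with $\lambda>0$, or alternatively that $\varphi_\lambda>0$ for large $|x|$ by Proposition \ref{prop2} applied with a suitable $\lambda_0\le\lambda$ — we conclude $\varphi_\lambda>0$ throughout $H_\lambda$. One subtlety to address carefully is the low regularity of $\varphi_\lambda$: because $H(\varphi)$ is merely $L^\infty$, $\varphi_\lambda$ is $C^{1,\beta}_{\mathrm{loc}}$ but not $C^2$, so the maximum principle must be applied in its distributional (viscosity or $W^{2,p}$) form; this is standard and harmless here since $\varphi_\lambda$ is still a genuine distributional supersolution of the Laplace equation.

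Finally, for the boundary statement I would apply Hopf's lemma at points of $T_\lambda$. Having shown $\varphi_\lambda>0$ in $H_\lambda$, $\varphi_\lambda=0$ on $T_\lambda$, and $\varphi_\lambda$ superharmonic near $T_\lambda$, Hopf's boundary point lemma yields that the outward normal derivative at $T_\lambda$ is strictly negative. The outward normal to $H_\lambda$ along $T_\lambda$ points in the $+x_1$ direction, so this is exactly $\partial_{x_1}\varphi_\lambda(x)<0$ for all $x\in T_\lambda$. I expect the main obstacle to be the rigorous justification of the maximum principle and Hopf's lemma in the low-regularity setting: one must verify that $\varphi_\lambda$ qualifies as a supersolution in an appropriate weak class on a neighborhood of each relevant point, and that the interior ball condition needed for Hopf's lemma is met — both of which follow from the $C^{1,\beta}$ regularity of $\varphi$ coming from elliptic regularity applied to \eqref{master1}, together with the fact that $T_\lambda$ is flat. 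The cancellation of the $2\Omega$ term, which is what makes the sign of $\Delta\varphi_\lambda$ controllable even though $\Delta\varphi$ itself has no good sign outside $D$, is the conceptual heart of the argument.
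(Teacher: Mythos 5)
Your argument is correct, but it is genuinely different from the one in the paper. The paper works entirely at the level of the integral representation: splitting $\psi$ over $H_\lambda$ and its reflection, it derives the explicit identity
\begin{equation*}
\varphi_\lambda(x)=-2\lambda\Omega(\lambda-x_1)+\frac{1}{2\pi}\int_{H_\lambda}\log\big(|x-y_\lambda|/|x-y|\big)\big[H(\varphi(y))-H(\varphi(y_\lambda))\big]\,dy,
\end{equation*}
observes that the integrand is pointwise nonnegative under the hypothesis, and reads off the quantitative lower bound $\varphi_\lambda(x)\geq-2\lambda\Omega(\lambda-x_1)>0$ together with $\partial_{x_1}\varphi_\lambda\leq 2\lambda\Omega<0$ on $T_\lambda$ by differentiating the kernel. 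Your route is the differential version of the same fact: the cancellation of $2\Omega$ in $\Delta\varphi_\lambda$ that you identify as the heart of the matter is precisely what makes the integrand in the paper's formula signed, so the two proofs are Green's-function duals of one another. What the paper's version buys is that strict positivity and the strict sign of the boundary derivative come for free from the explicit term $-2\lambda\Omega(\lambda-x_1)$ (here is where $\Omega<0$ enters), with no need to invoke the strong maximum principle, Hopf's lemma, or any discussion of weak supersolutions. Your version is more conceptual and standard for moving-plane arguments, but it carries two extra burdens that you correctly flag: the low-regularity justification of the strong minimum principle and of Hopf's lemma (harmless, since a continuous distributional supersolution satisfies the super-mean-value property and $\varphi_\lambda\in C^1$), and the exclusion of the degenerate case $\varphi_\lambda\equiv 0$. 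On that last point, your first suggested argument is too loose as stated --- a function tending to $-\infty$ can perfectly well be symmetric about $T_\lambda$; what actually rules out symmetry is the precise asymptotics, namely $\varphi_\lambda(x)\sim-2\lambda\Omega(\lambda-x_1)\to+\infty$ as $x_1\to-\infty$ --- but your second alternative, applying the second claim of Proposition \ref{prop2} with $\lambda_0=\lambda$, is airtight and available at this stage of the paper. With that fallback in place, your proof is complete.
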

\begin{proof}
First recall that the stream function can be written in the form
$$
\psi(x)=\frac{1}{2\pi}\int_{\RR^2}\log|x-y|\, H(\varphi(y))\,dy.
$$
By using the change of variables $y\mapsto y_\lambda$  which preserves Lebesgue measure one gets
\begin{eqnarray*}
\psi(x)&=&\frac{1}{2\pi}\int_{H_\lambda}\log|x-y|\, H(\varphi(y))\,dy+\frac{1}{2\pi}\int_{H_\lambda^c}\log|x-y|\, H(\varphi(y))\,dy\\
&=&\frac{1}{2\pi}\int_{H_\lambda}\log|x-y|\, H(\varphi(y))\,dy+\frac{1}{2\pi}\int_{H_\lambda}\log|x-y_\lambda|\, H(\varphi(y_\lambda))\,dy.
\end{eqnarray*}
Since the reflection with respect to $T_\lambda$ is an isometry we get
\begin{eqnarray*}
\psi(x_\lambda)&=&\frac{1}{2\pi}\int_{H_\lambda}\log|x_\lambda-y|\, H(\varphi(y))\,dy+\frac{1}{2\pi}\int_{H_\lambda}\log|x_\lambda-y_\lambda|\, H(\varphi(y_\lambda))\,dy\\
&=&\frac{1}{2\pi}\int_{H_\lambda}\log|x-y_\lambda|\, H(\varphi(y))\,dy+\frac{1}{2\pi}\int_{H_\lambda}\log|x-y|\, H(\varphi(y_\lambda))\,dy.
\end{eqnarray*}
This implies according to the definition of $\varphi_\lambda$
\begin{eqnarray*}
\varphi_\lambda(x)&=&-2\lambda\Omega(\lambda-x_1)+\frac{1}{2\pi}\int_{H_\lambda}\log\Big(|x-y_\lambda|/|x-y|\Big)\, \Big[H(\varphi(y))-H(\varphi(y_\lambda))\Big]dy.
\end{eqnarray*}
Since $H$ is increasing and according to the the assumption $\varphi_\lambda\geq0$ in $H_\lambda$, one gets
$$
H(\varphi(y))-H(\varphi(y_\lambda))\geq0, \quad \forall\, y\in H_\lambda.
$$
We combine this with the obvious  geometric fact: for $\lambda>0,$
$$
|x-y_\lambda|\geq |x-y|,\quad \forall\, x, y\in H_\lambda.
$$
Consequently,
$$
\varphi_\lambda(x)\geq-2\lambda\Omega(\lambda-x_1)>0,\quad \forall x\in H_\lambda.
$$
It remains to check that  $\partial_{x_1}\varphi_\lambda<0$ on $T_\lambda.$ Straightforward computations give for $x\in T_\lambda$ corresponding to  $\lambda= x_1,$
\begin{eqnarray*}
\partial_{x_1}\log\Big(|x-y_\lambda|/|x-y|\Big)&=&\frac{x_1-(2\lambda-y_1)}{|x-y_\lambda|^2}-\frac{x_1-y_1}{|x-y|^2}\\
&=&2\frac{y_1-\lambda}{|x-y|^2}
\end{eqnarray*}
which implies
$$
\partial_{x_1}\log\Big(|x-y_\lambda|/|x-y|\Big)<0,\quad \forall y\in H_\lambda.
$$
Therefore we get for $x\in T_\lambda$
\begin{eqnarray*}
\partial_{x_1}\varphi_\lambda(x)&=&2\lambda\Omega+\frac{1}{2\pi}\int_{H_\lambda}\partial_{x_1}\,\log\Big(|x-y_\lambda|/|x-y|\Big)\, \Big[H(\varphi(y))-H(\varphi(y_\lambda))\Big]dy\\
&<&0.
\end{eqnarray*}
This concludes the proof of Proposition \ref{prop3}.
\end{proof}
Now we discuss a more precise  statement of Proposition \ref{prod1}.
\begin{prop}\label{prop4}
The following claims hold true.
\begin{enumerate}
\item For any $\lambda>0$, we have
$$
\varphi_\lambda(x)>0,\quad\forall x\in H_\lambda.
$$
\item The function $\varphi$ is radial and satisfies for any $r>0$
$$
\partial_r\varphi(r)<0.
$$
\end{enumerate}

\end{prop}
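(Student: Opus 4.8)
\emph{Proof sketch.} The plan is a standard moving-plane continuation, in which Proposition \ref{prop2} supplies the starting half-spaces and Proposition \ref{prop3} is the continuation step. Recall that $\varphi$ and $\nabla\varphi$ are continuous on $\RR^2$, so that $(\lambda,x)\mapsto\varphi_\lambda(x)=\varphi(x)-\varphi(x_\lambda)$ and its gradient in $x$ depend continuously on $(\lambda,x)$. Introduce
\[
S=\Big\{\lambda>0:\ \varphi_\mu>0\ \text{in}\ H_\mu\ \text{for every}\ \mu\geq\lambda\Big\},
\]
which is upward closed and, by Proposition \ref{prop2}, contains $[\lambda^\star,+\infty)$; hence $\Lambda:=\inf S$ satisfies $0\leq\Lambda\leq\lambda^\star$. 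Letting $\mu\downarrow\Lambda$ in the inequality $\varphi_\mu>0$ on $H_\mu$ and using continuity gives $\varphi_\Lambda\geq0$ on $H_\Lambda$, so Proposition \ref{prop3} applies and yields $\varphi_\Lambda>0$ on $H_\Lambda$ together with $\partial_{x_1}\varphi_\Lambda<0$ on $T_\Lambda$; in particular $\Lambda\in S$, i.e. $[\Lambda,+\infty)\subset S$. The whole matter is to show $\Lambda=0$, and \emph{this push-down is the main obstacle}.

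Assume $\Lambda>0$. Apply the second part of Proposition \ref{prop2} with $\lambda_0=\Lambda/2$ to get $R>0$ with $\varphi_\lambda>0$ on $H_\lambda\cap\{|x|\geq R\}$ for all $\lambda\geq\Lambda/2$; thus, for $\lambda$ slightly below $\Lambda$, positivity of $\varphi_\lambda$ on $H_\lambda$ only needs to be checked on the compact set $K=\overline{H_\Lambda}\cap\overline{B(0,R)}$. Fix $\delta>0$ small and split $K$ into $K\cap\{x_1\leq\Lambda-\delta\}$ and the slab $K\cap\{x_1>\Lambda-\delta\}$. On the first piece $\varphi_\Lambda\geq c_\delta>0$ by strict positivity, so by uniform continuity $\varphi_\lambda>0$ there once $\lambda$ is close enough to $\Lambda$. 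On the slab one uses that $\partial_{x_1}\varphi_\Lambda<0$ on the compact part of $T_\Lambda$ lying in a fixed large ball; by continuity of $\partial_{x_1}\varphi_\lambda$ in $(\lambda,x)$ and after shrinking $\delta$, one keeps $\partial_{x_1}\varphi_\lambda<0$ on a full neighborhood of that set, and since $\varphi_\lambda$ vanishes identically on $T_\lambda$, integrating $\partial_{x_1}\varphi_\lambda$ along the horizontal segment from $x$ to its projection onto $T_\lambda$ gives $\varphi_\lambda(x)>0$ on the slab. Combining the three regions, there is $\epsilon\in(0,\Lambda/2)$ with $\varphi_\lambda>0$ on $H_\lambda$ for all $\lambda\in[\Lambda-\epsilon,\Lambda]$; together with $[\Lambda,+\infty)\subset S$ this gives $\Lambda-\epsilon\in S$, contradicting $\Lambda=\inf S$. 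Hence $\Lambda=0$, which is exactly claim $(1)$.

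It remains to prove claim $(2)$. By claim $(1)$ we have $\varphi_\lambda\geq0$ on $H_\lambda$ for every $\lambda>0$, so Proposition \ref{prop3} applies at each such $\lambda$ and gives $\partial_{x_1}\varphi_\lambda<0$ on $T_\lambda$; since a direct computation gives $\partial_{x_1}\varphi_\lambda(x)=2\,\partial_{x_1}\varphi(x)$ for $x\in T_\lambda$, we get $\partial_{x_1}\varphi(\lambda,x_2)<0$ for every $\lambda>0$ and every $x_2\in\RR$. For the radial symmetry, observe that Propositions \ref{prop2} and \ref{prop3} used nothing specific to the $x_1$-direction beyond the facts that $0$ is the barycenter of $D$ (so the first moments of $H(\varphi)$ vanish, which produced the decomposition and the decay used in Proposition \ref{prop2}) and that the quadratic--logarithmic structure is rotation invariant. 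Hence the argument above applies verbatim with $e_1$ replaced by an arbitrary unit vector $e$, yielding $\varphi(x)>\varphi(x^{e}_\lambda)$ on $\{x\cdot e<\lambda\}$ for every $\lambda>0$, where $x^{e}_\lambda$ is the reflection of $x$ across the line $\{x\cdot e=\lambda\}$. Letting $\lambda\downarrow0$ and using continuity, then repeating with $-e$ in place of $e$, shows $\varphi$ is invariant under the reflection across the line through $0$ orthogonal to $e$; as $e$ is arbitrary and such reflections generate $O(2)$, $\varphi$ is radial, say $\varphi(x)=f(|x|)$ with $f\in C^1((0,+\infty))$. Finally, $\partial_{x_1}\varphi(\lambda,0)<0$ for $\lambda>0$ reads $f'(\lambda)<0$, i.e. $\partial_r\varphi(r)<0$ for all $r>0$, which finishes the proof.
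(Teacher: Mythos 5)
Your proposal is correct and follows essentially the same moving-plane continuation as the paper: Proposition \ref{prop2} supplies the starting interval and the confinement to a compact set, and Proposition \ref{prop3} gives both the strict positivity at the critical value $\Lambda$ and the strict inequality $\partial_{x_1}\varphi_\Lambda<0$ on $T_\Lambda$ used to push below it, with part (2) obtained by letting $\lambda\downarrow 0$ in all directions exactly as in the paper. The only difference is presentational: the paper derives the contradiction at $\Lambda>0$ from a sequence of bad points $x_n\to\overline{x}\in T_\alpha$ and a difference-quotient argument, whereas you argue directly by compactness and uniform continuity on a three-region split; both hinge on the same two ingredients.
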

\begin{proof}
{\bf{(1)}} Define the set
$$
I\triangleq \big\{\lambda>0, \varphi_\lambda>0\quad\hbox{in} \quad H_\lambda\big\}.
$$
According to Proposition \ref{prop2} this set is nonempty and contains the interval $[\lambda^\star,\infty[.$ Let $(\alpha,\infty[$ be the largest interval contained in $I$ and  assume by contradiction  that $\alpha>0.$ First observe that by a continuation principle  $\varphi_\alpha\geq0$ in $H_\alpha$ and therefore  Proposition \ref{prop3} implies that,
$$ \forall x\in H_\alpha,\quad \varphi_\alpha(x)>0.
$$
This means  that $\alpha$ belongs to this maximal interval and thus it coincides with the closed interval $[\alpha,\infty[.$ By the maximality of the interval $[\alpha,+\infty[$, there exist two sequences $(\alpha_n)$ and $(x_n)$ with the following properties
$$
0<\alpha_n<\alpha,\,\lim_{n\to\infty}\alpha_n=\alpha;\quad  x_n \in H_{\alpha_n}\quad \hbox{and}\quad \varphi_{\alpha_n}(x_n)\le0.
$$  According to Proposition \ref{prop2} the sequence $(x_n)$ is bounded and therefore up to an extraction we can assume that this sequence converges to some point $\overline{x}\in H_{\alpha}\cup T_{\alpha}$. By passing to the limit using the continuity of the map $(\alpha,x)\mapsto\varphi_\alpha(x)$ it is easy to check that
$$
\varphi_{\alpha}(\overline{x})\le0
$$
and consequently $\overline{x}\in T_\alpha$ because $\varphi_\alpha(x)>0$ in $H_\alpha$. Using Proposition \ref{prop3} we \mbox{get  $\partial_{x_1}\varphi_\alpha(\overline{x})<0.$} This contradicts the assumption $\varphi_{\alpha_n}(x_n)\le 0$ because  it yields
$$
\frac{\varphi(x_n^1,x_n^2)-\varphi(2\alpha_n-x_n^1,x_n^2)}{2(\alpha_n- x_n^1)}\leq0.
$$ 
By passing to the limit as $n$ goes to $\infty$ we find $\partial_{x_1}\varphi(\overline{x})\geq0$, which is impossible.

\vspace{0,2cm}

{$\bf{(2)}$} By passing to the limit as $\lambda\to0$ in the first point of the proposition and using the continuity of $\varphi$ one gets
$$
\varphi(-x_1,x_2)\geq \varphi(x_1,x_2),\quad\forall x_1\geq0. 
$$
By changing the orientation we get the reverse  inequality and therefore  we get the equality $
\varphi(-x_1,x_2)=\varphi(x_1,x_2) 
$. This can be implemented for  any moving plane (line) and consequently the solution $\varphi$ will be invariant by reflection  with respect to any line passing by the origin. This means that $\varphi$ is radial. Now we can use  Proposition \ref{prop3} by taking  $\lambda=x_1>0$, leading to 
$$
\partial_{x_1}\varphi(x_1,0)<0.
$$ 
Since $\varphi$ is radial we get $\partial_r\varphi(r)<0$ for any $r>0$ and therefore the proof of Proposition \ref{prop4} is now complete.
\end{proof}
\section{Case $\Omega=\frac12$}
In this section we shall be concerned with the proof of Theorem \ref{thm2} dealing with the  special angular velocity $\Omega=\frac12.$ We will prove the non existence of non trivial V-states rotating with this angular velocity. 
\begin{proof}
Recall that the boundary of any V-state is described by the equation
$$
\varphi\triangleq\mu+\frac14|x|^2-\psi(x)=0,\forall x\in \partial D.
$$ 
It is easy to see that
$$
\Delta \varphi=1-\chi_D, \,\, x\in \RR^2
$$
and therefore 
\begin{equation}\label{B}
\left\{ \begin{array}{ll}
\Delta \varphi=0\quad \hbox{in}\quad D&\\
\varphi=0,\quad \partial D.
\end{array} \right. 
\end{equation} 
By the uniqueness property of the Cauchy problem we obtain 
$$
\forall x\in D, \quad \varphi(x)=0.
$$
This implies that
$$
\psi(x)=\mu+\frac14|x|^2,\quad \forall x\in \overline{D}
$$
and consequently,
\begin{eqnarray*}
4\partial_z\psi(z)=\overline{z},\quad \forall z\in {D}.
\end{eqnarray*}
By the extension principle for continuous functions we get
\begin{eqnarray*}
\mathcal{C}(\chi_D)(z)&=&-\overline{z},\quad \forall z\in \overline{D}.
\end{eqnarray*}
with
$$
\mathcal{C}(\chi_D)(z)\triangleq\frac{1}{\pi}\int_{D}\frac{1}{\xi-z}dA(\xi)
$$
being the Cauchy transform of the domain $D.$ We claim that necessary the domain is a disc. This was proved in a general framework  \mbox{in \cite{HMV1}} and for the convenience of the reader we shall give a proof in the few next lines.
Set 
$$
 G(z)\triangleq z\,\mathcal{C}(\chi_D)(z)\quad\hbox{and}\quad H\triangleq\operatorname{Im} G(z),
$$ defined in $\CC\backslash \overline{D}$. The function $G$ is analytic in this domain and has a continuous extension up to the boundary of $\widehat\CC\backslash D$. Note that $\displaystyle{\lim_{z\to\infty}G(z)\in \RR}$ and consequently the harmonic function $H$ vanishes on the boundary  $\partial D\cup\{\infty\}$. By the  maximum principle this implies that  $H$  vanishes everywhere in $\CC\backslash D$. Therefore $G$ is constant everywhere and in particular in $\partial D$, that is,
$$
|z|^2=Cte, \quad\forall \, z\in \partial D.
$$
This proves that $\partial D$ is a disc.

%
%

\end{proof}
\begin{gracies}
We would like to thank Joan Verdera for the fruitful discussion about the stationary patches and for pointing out the valuable references \cite{Fran} and \cite{wolfgang}. This work was
partially supported by the ANR
project Dyficolti ANR-13-BS01-0003-01.
\end{gracies}


\begin{thebibliography}{9999}

\bibitem{BM} A.L.\ Bertozzi and A.J.\ Majda, {\it
Vorticity and Incompressible Flow}, Cambridge texts in applied
Mathematics, Cambridge University Press, Cambridge, (2002).



\bibitem{B} J.\ Burbea, {\it Motions of vortex patches}, Lett.
Math. Phys. {\bf 6} (1982),  1--16.

\bibitem{Ch}  J.-Y. \textsc{Chemin}. {\it Perfect incompressible Fluids.} Oxford University Press 1998.

\bibitem{DZ} G.S.\ Deem and N. J.\ Zabusky, {\it Vortex waves : Stationary ``V-states", Interactions, Recurrence, and Breaking},
Phys. Rev. Lett. {\bf 40} 13 (1978),  859--862.

 \bibitem{Fran} L. E. {Fraenkel.}  {\it An introduction to maximum principles and symmetry in elliptic problems.} Cambridge Tracts in Mathematics, 128. Cambridge University Press, Cambridge, 2000. 
  \bibitem{HMV} T. {Hmidi}, J. {Mateu}, J. {Verdera}. {\it Boundary Regularity of Rotating Vortex Patches}. Arch. Ration. Mech. Anal. 209 (2013), no. 1, 171--208.

\bibitem{HMV1} T. {Hmidi}, J. {Mateu}, J. {Verdera}. {\it  On rotating doubly connected vortices.} 	arXiv:1310.0335.

\bibitem{Kirc} G. Kirchhoff,{\ Vorlesungen uber mathematische Physik} (Leipzig, 1874).


 \bibitem{wolfgang} W.  {Reichel.} {Characterization of balls by Riesz-potentials.} Ann. Mat. Pura Appl. (4) 188 (2009), no. 2, 235--245.

\bibitem{Y1} Y.~ Yudovich, { Nonstationary flow of an ideal incompressible liquid}. Zh. Vych. Mat., {\bf 3} (1963), 1032--1066.
 \end{thebibliography}
\end{document}